\newtheorem{theorem}{Theorem}
\newtheorem{lemma}[theorem]{Lemma}
\newtheorem{conjecture}[theorem]{Conjecture}
\theoremstyle{remark}
\def\l{{\mathfrak l}}
  \let\l\lambda   
\let\GL\Lambda
\def\GL{\mathbf{GL}}
\def \GL2 {{\text{GL}_2}}
\def\Z{{\mathbb Z}}
\newcommand*\HYPERskip{&}
\newcommand*\pFq{
\begingroup
\catcode`\,\active
\def ,{\HYPERskip}%
\doHyper
}
\def\doHyper#1#2#3#4#5{%
\, _{#1}F_{#2}\left[\begin{matrix}#3 \smallskip \\  #4\end{matrix} \; ; \; #5\right]%
\endgroup
}
\title{On a conjecture of Kimoto and Wakayama}
\author{Ling Long, Robert Osburn and Holly Swisher}
\address{Department of Mathematics,  Louisiana State University, 303 Lockett Hall, Baton Rouge, LA 70803, USA}
\email{llong@math.lsu.edu}
\address{School of Mathematics and Statistics, University College Dublin, Belfield, Dublin 4, Ireland}
\email{robert.osburn@ucd.ie}
\address{Department of Mathematics, Oregon State University, 368 Kidder Hall, Corvallis, OR 97331, USA}
\email{swisherh@math.oregonstate.edu}
\subjclass[2010]{Primary: 33C20, 11B65; Secondary: 11M41}
\begin{document}

\begin{abstract}
We prove a conjecture due to Kimoto and Wakayama from 2006 concerning Ap{\'e}ry-like numbers associated to a special value of a spectral zeta function. Our proof uses hypergeometric series and $p$-adic analysis.
\end{abstract}

\date{\today}

\maketitle

\section{Introduction}

Let $Q=Q_{\alpha, \beta}$ be the ordinary differential operator on $L^{2}(\mathbb{R}) \otimes \mathbb{C}^2$ defined by

\[
Q:= \begin{pmatrix} \alpha & 0 \\ 0 & \beta \end{pmatrix} \Biggl( -\frac{1}{2} \frac{d^2}{dx^2} + \frac{1}{2}x^2 \Biggr) + \begin{pmatrix} 0 & -1 \\ 1 & 0 \end{pmatrix} \Biggl( x \frac{d}{dx} + \frac{1}{2} \Biggr)
\]

\noindent where $\alpha$, $\beta$ are positive real numbers satisfying $\alpha \beta > 1$. The system defined by the operator $Q$ is called the {\it non-commutative harmonic oscillator} \cite{pw}. The operator $Q$ is positive, self-adjoint and unbounded with a discrete spectrum in which the multiplicities of the eigenvalues

\[
0 < \lambda_{1} \leq \lambda_{2} \leq \lambda_{3} \dotsc (\to \infty)
\]

\noindent are uniformly bounded. Thus, one can define the {\it spectral zeta function}

\[
\zeta_{Q}(s) : = \sum_{n=1}^{\infty} \frac{1}{\lambda_{n}^s}.
\]

The series $\zeta_{Q}(s)$ is absolutely convergent, defines a holomorphic function in $s$ for $\operatorname{Re} (s) > 1$ and can be meromorphically
continued to $\mathbb{C}$ (for details, see \cite{iw1}, \cite{iw2}). In \cite{KW}, Kimoto and Wakayama discuss the Ap{\'e}ry-like numbers

\begin{equation*}
\tilde{J}_{2}(n) := \sum_{k=0}^{n} (-1)^k \binom{-\frac12}{k}^2\binom{n}{k}
\end{equation*}

\noindent which occur in a representation of the special value $\zeta_{Q}(2)$. Similar to the Ap{\'e}ry numbers for $\zeta(2)$ and $\zeta(3)$, these numbers satisfy the recurrence relation (see Proposition 4.11 in \cite{iw2})

\[
4n^2 \tilde{J}_{2}(n) - (8n^2 - 8n+3) \tilde{J}_{2}(n-1) + 4(n-1)^2 \tilde{J}_{2}(n-2) = 0,
\]

\noindent with $\tilde{J}_{2}(0)=1$ and $\tilde{J}_{2}(1)=\frac{3}{4}$, possess many interesting arithmetic properties such as

\[
\tilde{J}_{2}(mp^r) \equiv \tilde{J}_{2}(mp^{r-1}) \pmod{p^r}
\]

\noindent for integers $m$, $r \geq 1$ and primes $p \geq 3$ (see Theorem 6.2 of \cite{KW}) and have the modular parametrization (see Theorem 5.1 in \cite{kw2} or $\# 19$ in Zagier's list \cite{z})

\[
\frac{\eta(2z)^{22}}{\eta(z)^{12} \eta(4z)^{8}} = \sum_{n=0}^{\infty} \tilde{J}_{2}(n) \, t^n
\]

\noindent where

\[
t=t(z)=16\frac{\eta(z)^8 \eta(4z)^{16}}{\eta^{24}(2z)}
\]

\noindent and $\eta(z)$ is the Dedekind eta-function. Our interest concerns the following conjecture from \cite{KW}.

\begin{conjecture}(Kimoto-Wakayama) \label{K-Wconjecture}
For primes $p\geq 3$,
\[
\sum_{k=0}^{p-1} \tilde{J}_{2}(k)^2 \equiv (-1)^{\frac{p-1}{2}} \pmod{p^3}.
\]
\end{conjecture}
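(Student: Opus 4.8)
\emph{Proof proposal.} The plan is to unfold the square, interchange the order of summation, and reduce the result modulo $p^3$ to a shape governed by classical supercongruence machinery. Since $(-1)^{k}\binom{-1/2}{k}=\binom{2k}{k}/4^{k}$, one has
\[
\tilde{J}_{2}(k)=\sum_{j=0}^{k}(-1)^{j}\,\frac{(1/2)_{j}^{2}}{(j!)^{2}}\binom{k}{j}={}_{3}F_{2}\!\left(\tfrac12,\tfrac12,-k;1,1;1\right),
\]
so, writing $A_{j}=(-1)^{j}\binom{2j}{j}^{2}/16^{j}$,
\[
\sum_{k=0}^{p-1}\tilde{J}_{2}(k)^{2}=\sum_{i,j=0}^{p-1}A_{i}A_{j}\sum_{k=0}^{p-1}\binom{k}{i}\binom{k}{j}.
\]
The inner sum is evaluated by the Vandermonde convolution $\binom{k}{i}\binom{k}{j}=\sum_{t}\binom{i+j-t}{i}\binom{i}{t}\binom{k}{i+j-t}$ together with the hockey-stick identity $\sum_{k=0}^{p-1}\binom{k}{i+j-t}=\binom{p}{i+j-t+1}$, turning the problem into a weighted sum of the binomial coefficients $\binom{p}{m}$.

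I would then prune this double sum using two $p$-adic observations. First, by Kummer's theorem $\binom{2i}{i}$ is exactly divisible by $p$ for $(p+1)/2\le i\le p-1$, hence $A_{i}\equiv0\pmod{p^{2}}$ there; so a term with both $i,j$ in this range is $\equiv0\pmod{p^{4}}$, and one with exactly one of them there survives only to precision $p$. Second, $\binom{p}{m}=0$ for $m>p$, $\binom{p}{p}=1$, and $\binom{p}{m}\equiv\frac{(-1)^{m-1}p}{m}\bigl(1-pH_{m-1}\bigr)\pmod{p^{3}}$ for $1\le m\le p-1$. Combining these, modulo $p^{3}$ the sum is the union of a ``diagonal'' contribution, coming from $\binom{p}{p}=1$ and dominated by the term $i=j=(p-1)/2$, $t=0$, which by Morley's congruence $\binom{p-1}{(p-1)/2}\equiv(-1)^{(p-1)/2}4^{p-1}\pmod{p^{3}}$ equals $(-1)^{(p-1)/2}64^{p-1}$ plus explicit corrections of order $p^{2}$ from the neighbouring pairs; and a ``$p$-divisible'' contribution coming from the $\binom{p}{m}$ with $m<p$, for which only the ranges $0\le i,j\le(p-1)/2$, the congruence $\binom{p-1}{\ell}\equiv(-1)^{\ell}(1-pH_{\ell})\pmod{p^{2}}$, and the exact values of the integers $\binom{i+j-t}{i}\binom{i}{t}$ are needed.

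The last and decisive step is to show that these pieces collapse to $(-1)^{(p-1)/2}$. After collecting terms, the leading part of the $p$-divisible contribution is a terminating hypergeometric sum at $1$ (of ${}_{4}F_{3}$-type, or a short combination of such), which I would evaluate via a classical Saalsch\"utz/Whipple/Dixon identity and convert into $p$-adic Gamma factors by the Gross--Koblitz formula; the leftover harmonic-sum terms, including the discrepancy between $64^{p-1}$ and $1$ modulo $p^{3}$, are then eliminated using Wolstenholme's theorem ($H_{p-1}\equiv0\pmod{p^{2}}$, $\sum_{k=1}^{p-1}k^{-2}\equiv0\pmod p$), Morley's congruence once more, and the evaluation $H_{(p-1)/2}\equiv-2q_{p}\pmod p$ in terms of the Fermat quotient $q_{p}=(2^{p-1}-1)/p$. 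The small prime $p=3$, where Morley's congruence fails, is checked by hand. The main obstacle is exactly this final bookkeeping: the single term $i=j=(p-1)/2$ already contributes $(-1)^{(p-1)/2}64^{p-1}$, which is \emph{not} $\equiv(-1)^{(p-1)/2}\pmod{p^{3}}$, so the argument must organize the numerous corrections of orders $p$ and $p^{2}$ into a form in which one hypergeometric evaluation plus standard Fermat-quotient and Wolstenholme identities forces exact cancellation.
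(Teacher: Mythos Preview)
Your setup is sound: expanding the square, expressing $\binom{k}{i}\binom{k}{j}$ in the basis $\binom{k}{m}$, and summing over $k$ via the hockey-stick identity is a legitimate way to convert the problem into a weighted sum of $\binom{p}{m}$'s. The $p$-adic pruning you describe is also correct. But the proposal stops exactly where the content begins. You write that the leading $p$-divisible piece is ``a terminating hypergeometric sum at $1$ (of ${}_4F_3$-type)'' that you ``would evaluate via a classical Saalsch\"utz/Whipple/Dixon identity'', and you acknowledge that the diagonal term $(-1)^{(p-1)/2}64^{p-1}$ is already off by order $p$. No identity is named, no cancellation is exhibited, and your own last paragraph identifies this bookkeeping as the main obstacle. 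As it stands this is a plan, not a proof: the decisive step---why the many corrections of orders $p$ and $p^2$ sum to exactly the discrepancy $(-1)^{(p-1)/2}(1-64^{p-1})$---is missing.

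The paper avoids this swamp by two structural moves you do not have. First, it deforms the parameters $\tfrac12,\tfrac12$ to $\tfrac{1-p}{2},\tfrac{1+p}{2}$, replacing $\tilde J_2(x)$ by the degree-$n$ polynomial $f_n(x)={}_3F_2(-n,1+n,-x;1,1;1)$, $n=\tfrac{p-1}{2}$; since the two agree modulo $p^2$ the passage costs only a separate (short) argument. Second---and this is the key---a ${}_3F_2$ transformation rewrites $f_n(x)$ as a sum over $\binom{-1-x}{k}$ rather than $\binom{x}{k}$, so that $\binom{x}{j}f_n(x)$ involves the products $\binom{x}{j}\binom{-1-x}{k}=\frac{(-1)^k}{j!k!}(x-j+1)_{j+k}$, a single rising factorial whose sum over $0\le x\le p-1$ telescopes \emph{exactly}. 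The resulting inner sum $\sum_k(-1)^k\binom{n}{k}\binom{n+k}{k}/(j+k+1)$ is then Pfaff--Saalsch\"utz: it vanishes identically for $j<n$ and equals $\frac{(-1)^n}{(2n+1)}\binom{2n}{n}^{-1}$ for $j=n$. Thus the main term comes from a single $(j=n)$ contribution (whence Morley enters cleanly, with no stray power of $64$), and the entire $p^2$-correction is a double sum in $j,k$ that is visibly antisymmetric under $j\leftrightarrow k$ and hence zero. Your direct expansion produces a symmetric triple sum with no such vanishing mechanism; without either the $(1\pm p)/2$ deformation or the $\binom{-1-x}{k}$ trick, I do not see how to close the argument.
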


In this paper, we prove two results, the second of which is equivalent to Conjecture \ref{K-Wconjecture}. Recall that for a nonnegative integer $r$ and $\alpha_i$, $\beta_i \in \mathbb{C}$ with $\beta_i\not\in\{\ldots, -3, -2, -1\}$, the (generalized) hypergeometric series $_{r+1}F_{r}$ is defined by
\[
\pFq{r+1}{r}{\alpha_1, \alpha_2, \ldots ,\alpha_{r+1}}{ ,\beta_1  , \ldots , \beta_r}{\l} :=
\sum_{k= 0}^{\infty} \frac{(\alpha_1)_k(\alpha_2)_k \ldots (\alpha_{r+1})_k}{(\beta_1)_k
\ldots (\beta_r)_k} \cdot \frac{\l^k}{k!},
\]
where $(a)_{0}:=1$ and $\displaystyle{(a)_k :=a(a+1)\cdots(a+k-1)}$.  This series converges for $|\l|<1$. Hypergeometric series are an important class of special functions which have been investigated by Gauss, Euler, and Kummer and have numerous applications to the theory of differential equations, algebraic varieties and physics. For a thorough treatment of hypergeometric series, the reader is referred to \cite{AAR}. Note that

\[
\tilde{J}_{2}(n) = \pFq{3}{2}{\frac12 & \frac12 & -n}{& 1 & 1}{1}.
\]

\begin{theorem}\label{K-W}
For primes $p>3$,

\begin{equation} \label{main1}
\sum_{x=0}^{p-1}\, \pFq{3}{2}{\frac {1-p}2,\frac {1+p}2,-x}{,1,1}{1}^2 \equiv (-1)^{\frac{p-1}2} \pmod{p^3}
\end{equation}

\noindent and for primes $p \geq 3$

\begin{equation} \label{main2}
\sum_{x=0}^{p-1}\, \pFq{3}{2}{\frac 12,\frac 12,-x}{,1,1}{1}^2\equiv (-1)^{\frac{p-1}2} \pmod{p^3}.
\end{equation}

\end{theorem}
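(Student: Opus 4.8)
The plan is to deduce both congruences from one computation. Write $F_p(x):={}_3F_2\big(\tfrac{1-p}{2},\tfrac{1+p}{2},-x;1,1;1\big)$, so that \eqref{main1} asserts $\sum_{x=0}^{p-1}F_p(x)^2\equiv(-1)^{(p-1)/2}\pmod{p^3}$ while \eqref{main2} is the same statement with $\tilde{J}_2(x)={}_3F_2\big(\tfrac12,\tfrac12,-x;1,1;1\big)$ replacing $F_p(x)$. Since $\big(\tfrac{1-p}{2}\big)_k\big(\tfrac{1+p}{2}\big)_k=\prod_{j=0}^{k-1}\big((j+\tfrac12)^2-\tfrac{p^2}{4}\big)$ equals $(\tfrac12)_k^2\big(1-\tfrac{p^2}{4}\sum_{j=0}^{k-1}(j+\tfrac12)^{-2}+O(p^4)\big)$ for $0\le k\le\tfrac{p-1}{2}$, and vanishes for $k\ge\tfrac{p+1}{2}$ (where also $p^2\mid(\tfrac12)_k^2$), expanding each hypergeometric series in the basis $\{\binom{x}{k}\}_{k\ge0}$ gives $F_p(x)\equiv\tilde{J}_2(x)\pmod{p^2}$ for $0\le x\le p-1$. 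Hence $\sum_{x}\tilde{J}_2(x)^2-\sum_{x}F_p(x)^2=\sum_{x}\big(\tilde{J}_2(x)-F_p(x)\big)\big(\tilde{J}_2(x)+F_p(x)\big)$ is a sum of terms each $\equiv0\pmod{p^2}$, and the single mod-$p$ identity $\sum_{x}p^{-2}\big(\tilde{J}_2(x)-F_p(x)\big)\tilde{J}_2(x)\equiv0\pmod p$ upgrades this to $\equiv0\pmod{p^3}$. So \eqref{main1} will imply \eqref{main2} for $p>3$; the remaining case $p=3$ of \eqref{main2}, where \eqref{main1} itself does not hold, is verified directly.

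For \eqref{main1}, the first move is to use $(-x)_k=(-1)^kk!\binom{x}{k}$ to write $F_p(x)=\sum_{k\ge0}c_k\binom{x}{k}$ with $c_k=(-1)^k\big(\tfrac{1-p}{2}\big)_k\big(\tfrac{1+p}{2}\big)_k/(k!)^2$; as $\big(\tfrac{1-p}{2}\big)_k=0$ for $k\ge\tfrac{p+1}{2}$, this is a polynomial in $x$ of degree $\le\tfrac{p-1}{2}$. By the Vandermonde-convolution identity $\binom{x}{k}\binom{x}{l}=\sum_m\binom{k}{m}\binom{l}{m}\binom{x+m}{k+l}$ together with $\sum_{x=0}^{p-1}\binom{x+m}{k+l}=\binom{p+m}{k+l+1}$ (the subtracted term $\binom{m}{k+l+1}$ vanishing since $m\le\min(k,l)<k+l+1$),
\[
\sum_{x=0}^{p-1}F_p(x)^2=\sum_{k,l}c_kc_l\sum_{m}\binom{k}{m}\binom{l}{m}\binom{p+m}{k+l+1}.
\]
For $k,l\le\tfrac{p-1}{2}$ and $0\le m\le\min(k,l)$ one checks that $v_p\!\big(\binom{p+m}{k+l+1}\big)=1$ unless $k+l+1=p$, that is $k=l=\tfrac{p-1}{2}$, where it is $0$; moreover a short Pochhammer computation collapses $c_{(p-1)/2}$ to the clean value $\binom{p-1}{(p-1)/2}$. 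Hence, modulo $p^3$,
\[
\sum_{x=0}^{p-1}F_p(x)^2\equiv\binom{p-1}{(p-1)/2}^{2}\sum_{m=0}^{(p-1)/2}\binom{(p-1)/2}{m}^{2}\binom{p+m}{p}+p\,R_p,
\]
where $R_p$ collects, after extracting their common factor $p$, the contributions of the pairs $(k,l)\ne(\tfrac{p-1}{2},\tfrac{p-1}{2})$.

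The first summand will be controlled by classical input: with $H_m=\sum_{i=1}^m\tfrac1i$ and $H_m^{(2)}=\sum_{i=1}^m\tfrac1{i^2}$ one has $\binom{p+m}{p}=\prod_{i=1}^{m}(1+\tfrac{p}{i})\equiv1+pH_m+\tfrac{p^2}{2}(H_m^2-H_m^{(2)})\pmod{p^3}$, $\sum_m\binom{(p-1)/2}{m}^2=\binom{p-1}{(p-1)/2}$, Morley's congruence $\binom{p-1}{(p-1)/2}\equiv(-1)^{(p-1)/2}4^{p-1}\pmod{p^3}$ (which fails at $p=3$, explaining why \eqref{main1} requires $p>3$), and the known evaluations of $\sum_m\binom{(p-1)/2}{m}^2H_m$ and $\sum_m\binom{(p-1)/2}{m}^2(H_m^2-H_m^{(2)})$ modulo $p^2$, resp.\ $p$. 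For $R_p$ the idea is to use $p^{-1}\binom{p+m}{k+l+1}\equiv\tfrac{(-1)^{k+l-m}}{(k+l+1)\binom{k+l}{m}}(1+pH_m^{(k,l)})\pmod{p^2}$ with an explicit harmonic weight $H_m^{(k,l)}$, together with the summation $\sum_m(-1)^m\binom{k}{m}\binom{l}{m}/\binom{k+l}{m}=\binom{k+l}{k}^{-1}$; the resulting leading term reorganizes, via $\tfrac{k!\,l!}{(k+l+1)!}=\int_0^1t^k(1-t)^l\,dt$ applied to $\sum_k\binom{-1/2}{k}^2t^k={}_2F_1(\tfrac12,\tfrac12;1;t)$, into a $p$-adic truncation of $\int_0^1{}_2F_1(\tfrac12,\tfrac12;1;t)\,{}_2F_1(\tfrac12,\tfrac12;1;1-t)\,dt$ (minus an explicit boundary term), which together with its harmonic-weighted subleading companion is evaluated modulo $p^2$ by $p$-adic analysis, e.g.\ via the Gross--Koblitz formula and the reflection formula for the $p$-adic gamma function $\Gamma_p$. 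Assembling the two pieces, the Fermat quotient $q_p(2)=(2^{p-1}-1)/p$ and all harmonic sums cancel, and one is left with $\sum_{x=0}^{p-1}F_p(x)^2\equiv-\Gamma_p(\tfrac12)^2=(-1)^{(p-1)/2}\pmod{p^3}$.

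The main obstacle is this last step: evaluating $R_p$ modulo $p^2$ precisely enough, and verifying that its Fermat-quotient and quadratic-harmonic-sum contributions exactly annihilate those produced by Morley's congruence in the first summand. The combinatorial reductions to the displayed double sum and the isolation of the main term are routine; the delicate $p$-adic bookkeeping behind the cancellation is where the real difficulty lies.
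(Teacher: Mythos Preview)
Your setup is sound up to the displayed double sum
\[
\sum_{x=0}^{p-1}F_p(x)^2=\sum_{k,l}c_kc_l\sum_{m}\binom{k}{m}\binom{l}{m}\binom{p+m}{k+l+1},
\]
the isolation of the $(k,l)=(n,n)$ term with $n=\tfrac{p-1}{2}$, and the valuation claim $v_p\binom{p+m}{k+l+1}=1$ for $(k,l)\ne(n,n)$. But the proposal stalls exactly where the content lies. Two genuine gaps:

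\textbf{(i) The reduction from \eqref{main1} to \eqref{main2}.} You correctly note that it comes down to the single identity $\sum_{x}p^{-2}(\tilde J_2(x)-F_p(x))\,\tilde J_2(x)\equiv0\pmod p$, but you do not prove it, and it is not automatic. In the paper this is the statement $I(f_n g)\equiv I(f_n^2)\pmod{p^3}$, and its proof requires separate work: one must control the tail $n<j\le p-1$, where $p^2\mid\binom{-1/2}{j}^2$ but $j+k+1$ may equal $p$, and this is handled by a dedicated lemma using $\sum_{i=1}^{n}1/i^2\equiv0\pmod p$. Your sketch gives no mechanism for this cancellation.

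\textbf{(ii) The evaluation of $R_p$.} You propose to rewrite $p^{-1}\binom{p+m}{k+l+1}$ as $\tfrac{(-1)^{k+l-m}}{(k+l+1)\binom{k+l}{m}}(1+p\,H_m^{(k,l)})$, sum the inner $m$ via $\sum_m(-1)^m\binom{k}{m}\binom{l}{m}/\binom{k+l}{m}=\binom{k+l}{k}^{-1}$, and then interpret the leading piece as a truncated $\int_0^1{}_2F_1(\tfrac12,\tfrac12;1;t)\,{}_2F_1(\tfrac12,\tfrac12;1;1-t)\,dt$ to be evaluated by Gross--Koblitz. This is suggestive but not a proof: you have not identified which truncation you obtain, you have not evaluated it or its ``harmonic-weighted subleading companion'' modulo $p^2$, and you have not exhibited the cancellation of the Fermat quotient $q_p(2)$ against the Morley contribution. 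You yourself flag this as ``the main obstacle,'' and nothing in the proposal closes it.

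The paper avoids both difficulties by a device you do not use: instead of expanding \emph{both} factors of $F_p(x)^2$ in the basis $\binom{x}{k}$, it expands only one and then applies the ${}_3F_2$ transformation
\[
{}_3F_2\!\left[\begin{smallmatrix}-n,&1+n,&-x\\&1,&1\end{smallmatrix};1\right]
=(-1)^n\,{}_3F_2\!\left[\begin{smallmatrix}-n,&1+n,&1+x\\&1,&1\end{smallmatrix};1\right]
\]
to the remaining factor. This produces, via a telescoping sum for $\sum_x(x-j+1)_{j+k}$ and the Pfaff--Saalsch\"utz evaluation of $\sum_k(-1)^k\binom{n}{k}\binom{n+k}{k}/(j+k+1)$, the closed form
\[
I(f_n^2)\equiv(-1)^n+p^2(-1)^n\sum_{j,k=0}^{n}\frac{(-1)^{j+k}\binom{n}{j}\binom{n+j}{j}\binom{n}{k}\binom{n+k}{k}\,[H_k-H_j]}{j+k+1}\pmod{p^3},
\]
and the double sum vanishes by antisymmetry in $(j,k)$. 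No Fermat quotients, no Gross--Koblitz, no delicate cancellations; the transformation is the missing idea that makes the $p^2$-correction manifestly zero. Your symmetric expansion destroys exactly this antisymmetry, which is why you are forced into the unexecuted $R_p$ computation.
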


The proof of Theorem \ref{K-W} uses hypergeometric series and $p$-adic analysis. The paper is organized as follows. In Section 2, we briefly recall the required background concerning hypergeometric series, then prove Theorem \ref{K-W}.  Finally, we have numerically observed the following generalization of (\ref{main2}): for primes $p \geq 3$ and integers $r \geq 1$,
\[
 \sum_{x=0}^{p^r-1} \pFq{3}{2}{\frac12 & \frac12 & -x}{&1&1}{1}^2 \equiv (-1)^{\frac{p-1}{2}} \sum_{x=0}^{p^{r-1}-1} \pFq{3}{2}{\frac12 & \frac12 & -x}{&1&1}{1}^2 \pmod {p^{3r}}.
\]
We leave this to the interested reader.

\section{Proof of Theorem \ref{K-W}}\label{proof}

The proof below is motivated by the approach of Rutkowski in \cite{Rut}.  We start with some  preliminaries.

\subsection{Preliminaries}

\begin{lemma}\label{lem:2}  Given integers $j,k,m$, with $m\geq 1$, and $j,k\geq 0$,
\[
\sum_{x=0}^{m-1} (x-j+1)_{j+k} = \frac{(m-j)_{j+k+1}}{j+k+1}.
\]
\end{lemma}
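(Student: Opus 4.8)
The plan is to recognize the summand $(x-j+1)_{j+k}$ as a rising factorial of fixed length $j+k$ and to sum it in closed form using a discrete (finite-difference) analogue of the power rule $\int t^{n}\,dt = t^{n+1}/(n+1)$. Concretely, write $N := j+k$ for the (fixed) length, so that the summand is
\[
(x-j+1)_{j+k} = (x-j+1)(x-j+2)\cdots(x-j+N) = \frac{(x-j+N)!}{(x-j)!},
\]
a product of $N$ consecutive integers shifted so that the argument runs over $x = 0,1,\dots,m-1$. The key identity I would use is the telescoping relation
\[
(x-j+1)_{N} = \frac{1}{N+1}\Bigl[(x-j+1)_{N+1} - (x-j)_{N+1}\Bigr],
\]
which one checks immediately by factoring $(x-j+1)_{N}$ out of both rising factorials on the right: $(x-j+1)_{N+1} = (x-j+1)_{N}\cdot(x-j+N+1)$ and $(x-j)_{N+1} = (x-j)\cdot(x-j+1)_{N}$, and their difference is $(x-j+1)_{N}\bigl[(x-j+N+1)-(x-j)\bigr] = (N+1)(x-j+1)_{N}$.

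Given this, summing over $x$ from $0$ to $m-1$ telescopes:
\[
\sum_{x=0}^{m-1}(x-j+1)_{j+k} = \frac{1}{N+1}\Bigl[(m-1-j+1)_{N+1} - (0-j)_{N+1}\Bigr] = \frac{1}{j+k+1}\Bigl[(m-j)_{j+k+1} - (-j)_{j+k+1}\Bigr].
\]
It remains to observe that the boundary term $(-j)_{j+k+1}$ vanishes: since $j \geq 0$, the rising factorial $(-j)_{j+k+1} = (-j)(-j+1)\cdots(-j+j+k) = (-j)(-j+1)\cdots 0 \cdots (k)$ contains the factor $0$ (the factor $(-j+j) = 0$ occurs because $j+k+1 > j$, i.e. the product has length at least $j+1$). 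Hence the sum equals $(m-j)_{j+k+1}/(j+k+1)$, as claimed.

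I do not expect any serious obstacle here; the only point requiring a little care is the vanishing of the lower boundary term, which is where the hypothesis $j \geq 0$ (and $k \geq 0$, ensuring the length $j+k+1$ is large enough to reach the zero factor) is actually used — if $j$ were negative this term would not vanish and the formula would need a correction. One could alternatively prove the lemma by induction on $m$, using the same one-step identity $(m-1-j+1)_{j+k} = \bigl[(m-j)_{j+k+1} - (m-1-j)_{j+k+1}\bigr]/(j+k+1)$ as the inductive step, but the telescoping argument above is cleaner and makes the role of the boundary term transparent.
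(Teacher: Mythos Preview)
Your proof is correct and uses essentially the same telescoping identity as the paper, namely $(y)_{N+1} - (y-1)_{N+1} = (N+1)(y)_{N}$ applied with $N=j+k$. The only cosmetic difference is that the paper reindexes to sum $(x)_{j+k}$ over $0\le x\le m-j$ and handles the edge cases $m\le j$ and $j=k=0$ separately, whereas you telescope directly in the original variable and dispose of the lower boundary term by observing $(-j)_{j+k+1}=0$; the underlying argument is the same.
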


\begin{proof}
First, we observe that the identity holds trivially when $m \leq j$ since both sides are $0$.  Thus we assume $m>j$.  Moreover, the identity holds when $j=k=0$ as both sides are $m$. We note that $(x)_{n+1} - (x-1)_{n+1} = (n+1)(x)_n$ holds for integers $x\geq 0$, $n\geq 1$.  Then for any positive integer $N$,
\[
(N)_{n+1} = \sum_{x=0}^N \left(  (x)_{n+1} - (x-1)_{n+1} \right) = (n+1)\sum_{x=0}^N (x)_n.
\]
Letting $N=m-j$ and $n=j+k$ gives
\[
\sum_{x=0}^{m-j} (x)_{j+k} = \frac{(m-j)_{j+k+1}}{j+k+1},
\]
which is equivalent to
\[
\sum_{x=j}^{m-1} (x-j+1)_{j+k} = \frac{(m-j)_{j+k+1}}{j+k+1}.
\]
Since $(x-j+1)_{j+k}=0$ for $0\leq x<j$, this yields the lemma.
\end{proof}

We now fix some notation for the duration of the paper.   Since one can verify (\ref{main2}) directly for $p=3$, we fix $p > 3$ prime and $n=\frac{p-1}{2}$. Given a function $g(x)$, we define (see \cite{Rut})

\[
\displaystyle I(g)=\sum_{x=0}^{p-1} g(x).
\]

\noindent Let $f_n(x)$ be the degree $n$ polynomial in $\Z_p[x]$ defined by
\begin{equation}\label{fdefinition}
f_n(x)=\sum_{j=0}^n \binom{n}{j}\binom{n+j}{j}\binom{x}{j}=\pFq{3}{2}{-n,1+n,-x}{,1,1}{1}.
\end{equation}
These are orthogonal polynomials satisfying the following recursion (see (4) of \cite{vH})
\[
(n+1)^2f_{n+1}(x)=(2n+1)(2x+1)f_n(x)+n^2f_{n-1}(x).
\]
Furthermore, let $g(x)$ be the degree $p-1$ polynomial in $\Z_p[x]$ defined by
\begin{equation}\label{gdefinition}
g(x) = \sum_{j=0}^{p-1} (-1)^j \binom{-\frac12}{j}^2\binom{x}{j}=\pFq{3}{2}{\frac{1}{2},\frac{1}{2},-x}{,1,1}{1}_{p-1}
\end{equation}

\noindent where the subscript in (\ref{gdefinition}) denotes the truncation of the sum at $p-1$.

\subsection{Relationship between (\ref{main1}) and (\ref{main2})}
With our new notation, (\ref{main1}) is equivalent to 

\begin{equation} 
I(f_n(x)^2) \equiv (-1)^n \pmod{p^3}, \label{one}
\end{equation}

\noindent while (\ref{main2}) is equivalent to 

\begin{equation*}
I(g(x)^2) \equiv (-1)^n \pmod{p^3}. \label{two}
\end{equation*}
First, by (\ref{fdefinition}) and (\ref{gdefinition}), we observe that
\[
g(x) - f_n(x) = \sum_{k=1}^n \frac{\bigl((\frac12)_k^2 - (\frac{1-p}{2})_k(\frac{1+p}{2})_k\bigr)(-x)_k}{k!^3} + \sum_{k=\frac{p+1}{2}}^{p-1} \frac{(\frac12)_k^2(-x)_k}{k!^3}.
\]
Since $(\frac12)_k^2 \equiv (\frac{1-p}{2})_k(\frac{1+p}{2})_k \pmod{p^2}$, and $(\frac12)_k\equiv 0 \pmod{p}$ for $k\geq \frac{p+1}{2}$, we see that $g(x)-f_n(x) \in p^2x\Z_p[x]$ of degree $p-1$.  Thus, $g(x) = f_n(x) + p^2h(x)$, where $h(x)\in x\Z_p[x]$ has degree $p-1$.
This yields that
\[
I(g(x)^2) \equiv I(f_n(x)^2) + 2p^2I(f_n(x)h(x)) \pmod{p^3}.
\]
Note that if we prove
\begin{equation}\label{three}
I(f_n(x)g(x)) \equiv I(f_n(x)^2) \pmod{p^3},
\end{equation}
then we can conclude $I(f_n(x)h(x))\equiv 0 \pmod{p}$ and thus $I(g(x)^2) \equiv I(f_n(x)^2)\pmod{p^3}$.  Hence, in order to prove Theorem \ref{K-W}, it suffices to prove \eqref{one} and \eqref{three}.

\subsection{Proof of Theorem \ref{K-W}}
From \eqref{fdefinition} and \eqref{gdefinition}, we have

\begin{align}
I(f_n(x)^2) &= \sum_{j=0}^n \binom{n}{j} \binom{n+j}{j} I\left(f_n(x) \cdot \binom{x}{j} \right)  \label{I(f^2)},\\
I(f_n(x)g(x)) &= \sum_{j=0}^{p-1} (-1)^j \binom{-\frac12}{j}^2 I\left(f_n(x)\cdot \binom{x}{j}\right) \label{I(fg)}.
\end{align}

\noindent Note that $\binom{n}{j}$, $\binom{n+j}{j}$, and $j!$ do not introduce any factors of $p$;  $\binom{-\frac12}{j}$ has no factors of $p$ when $0\leq j \leq n$, but does contain a copy of $p$ when $n<j\leq p-1$.  We also observe that
\begin{equation}\label{modp^2}
(-1)^j\binom{-\frac12}{j}^2 \equiv \binom{n}{j}\binom{n+j}{j} \pmod{p^2},
\end{equation}
so \eqref{three} is true modulo $p^2$. For a finer analysis we study $I\left(f_n(x) \cdot \binom{x}{j} \right)$ modulo $p^3$.

\begin{lemma}\label{cor:4}For any $j\geq 0$, $m\ge 1$,
\[
I\left(f_m(x) \cdot \binom{x}{j} \right)=(-1)^m \sum_{k=0}^m \binom{m}{k}\binom{m+k}{k}\frac{(-1)^k (p-j)_{j+k+1}}{j!k!(j+k+1)}.
\]
\end{lemma}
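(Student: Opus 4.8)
The plan is to compute $I\!\left(f_m(x)\cdot\binom{x}{j}\right)$ by expanding $f_m$ in the binomial basis via \eqref{fdefinition}, reducing everything to sums of the form $\sum_{x=0}^{p-1}\binom{x}{j}\binom{x}{k}$, and then applying Lemma~\ref{lem:2}. First I would write
\[
I\!\left(f_m(x)\cdot\binom{x}{j}\right)=\sum_{k=0}^m\binom{m}{k}\binom{m+k}{k}\sum_{x=0}^{p-1}\binom{x}{k}\binom{x}{j},
\]
so the whole problem reduces to evaluating $S_{j,k}:=\sum_{x=0}^{p-1}\binom{x}{j}\binom{x}{k}$. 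The key observation is the Vandermonde-type identity $\binom{x}{j}\binom{x}{k}=\binom{j+k}{j}\binom{x}{j+k}+(\text{lower degree terms})$; more usefully, one can write $\binom{x}{k}\binom{x}{j}$ directly as a single rising factorial up to sign. Indeed $\binom{x}{j}\binom{x}{k}$ does not telescope on its own, but $\binom{x}{j}\cdot\frac{(x-j)_{k}}{k!}=\binom{x}{j}\binom{x-j}{k}\cdot\frac{k!\,?}{}$ — the cleaner route is to use that $j!\,k!\binom{x}{j}\binom{x}{k}$ is a polynomial in $x$ that, after a shift, matches $(x-j+1)_{j+k}$ up to a controlled factor.

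More precisely, the identity I would actually use is
\[
\binom{x}{j}\binom{x}{k}=\sum_{i}\binom{j}{i}\binom{k}{i}\binom{x+i}{j+k} \quad\text{or, better, directly}\quad
(-1)^k\binom{x}{j}\binom{x}{k}=\frac{1}{j!\,k!}\,(x-j+1)_{j+k}\Big|_{\text{evaluated suitably}},
\]
so I would instead proceed as in Rutkowski: observe that $\binom{x}{j}\binom{x}{k}$ can be rewritten, using $\binom{x}{k}=(-1)^k\binom{k-1-x}{k}$ and standard factorial manipulations, so that $\sum_{x=0}^{p-1}$ of it becomes a single application of Lemma~\ref{lem:2} with $m=p$. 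This produces the factor $\frac{(p-j)_{j+k+1}}{j+k+1}$ and the reciprocal factorials $j!\,k!$, while the $(-1)^k$ and the overall $(-1)^m$ arise from the sign rearrangements needed to bring $\binom{x}{k}$ into rising-factorial form. Summing over $k$ against $\binom{m}{k}\binom{m+k}{k}$ then yields exactly the claimed expression.

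The main obstacle will be getting the combinatorial bookkeeping of signs and factorials exactly right: the term $(-1)^m$ out front is somewhat unexpected and must come from a global reindexing (likely $x\mapsto p-1-x$ or reflecting the rising factorial), and one must be careful that the identity of Lemma~\ref{lem:2} is applied with the correct values of $j$ and $k$ (note its roles of $j,k$ need not match the roles here) and that the edge cases $m\le j$ and the vanishing of $(x-j+1)_{j+k}$ for small $x$ are handled consistently. Once the single-sum evaluation $\sum_{x=0}^{p-1}\binom{x}{j}\binom{x}{k}=(-1)^{?}\frac{(p-j)_{j+k+1}}{j!\,k!\,(j+k+1)}$ is pinned down, substituting into \eqref{fdefinition} and collecting terms is routine.
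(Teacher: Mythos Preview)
Your proposal has a genuine gap. You expand $f_m$ via \eqref{fdefinition} as $\sum_k \binom{m}{k}\binom{m+k}{k}\binom{x}{k}$ and are then left with the inner sum $S_{j,k}=\sum_{x=0}^{p-1}\binom{x}{j}\binom{x}{k}$. But the product $\binom{x}{j}\binom{x}{k}$ is \emph{not} a single rising factorial in $x$: the two falling products overlap on the factors $x,x-1,\dots,x-\min(j,k)+1$, so no shift turns it into $(x-j+1)_{j+k}$. Your tentative identities (``$\binom{x}{j}\binom{x}{k}$ as a single rising factorial up to sign'', or as a single term with $(x-j+1)_{j+k}$) are simply false for general $j,k$, and the Vandermonde-type expansion you mention produces a further sum over $i$, not a single Pochhammer. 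Consequently Lemma~\ref{lem:2} cannot be applied in one stroke to $S_{j,k}$, and the mysterious global $(-1)^m$ has no source in your outline; the reindexing $x\mapsto p-1-x$ does not help because the statement is an exact identity in $\Z_p$, not a congruence, and $f_m(p-1-x)\neq f_m(-1-x)$ as polynomials.

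The missing idea is to transform $f_m$ \emph{before} multiplying. The paper applies a ${}_3F_2$ transformation (equivalently, the symmetry $f_m(x)=(-1)^m f_m(-1-x)$) to obtain
\[
f_m(x)=(-1)^m\sum_{k=0}^m\binom{m}{k}\binom{m+k}{k}\binom{-1-x}{k}.
\]
Now the relevant product is $\binom{x}{j}\binom{-1-x}{k}$, and since $\binom{-1-x}{k}=\frac{(-1)^k(x+1)_k}{k!}$, the factors $(x-j+1)\cdots x$ and $(x+1)\cdots(x+k)$ are contiguous and non-overlapping, giving exactly
\[
\binom{x}{j}\binom{-1-x}{k}=\frac{(-1)^k(x-j+1)_{j+k}}{j!\,k!}.
\]
Lemma~\ref{lem:2} with $m=p$ then applies directly, producing the single term $\dfrac{(p-j)_{j+k+1}}{j!\,k!\,(j+k+1)}$, and the $(-1)^m$ out front is precisely the sign from the transformation. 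Once you insert this one step, the rest of your plan (sum over $k$, collect) goes through verbatim.
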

\begin{proof}
We use the following identity (see page 142 of \cite{AAR}).  When $m$ is a positive integer and both sides converge,

\begin{equation}\label{eq:57}
\pFq{3}{2}{-m & a & b}{& d & e}{1}=\frac{(e-a)_m}{(e)_m}\,\pFq{3}{2}{-m & a & d-b}{& d & a+1-m-e}{1}.
\end{equation}

\noindent Letting $a=1+m$, $b=-x$, $d=e=1$ in (\ref{eq:57}) yields
\[
f_m(x)=(-1)^m \, \pFq{3}{2}{-m & 1+m & 1+x}{& 1 & 1}{1}=(-1)^m\sum_{k=0}^m\binom{m}{k}\binom{m+k}{k}\binom{-1-x}{k},
\]
and thus
\[
 I\left(f_m(x)\cdot \binom{x}{j}\right) = (-1)^m\sum_{k=0}^m\binom{m}{k}\binom{m+k}{k} I \left( \binom{x}{j}\binom{-1-x}{k} \right).
\]
Since $\binom{x}{j}\binom{-1-x}{k} = \frac{(-1)^k(x-j+1)_{j+k}}{j!k!}$, Lemma \ref{lem:2} yields the result.
\end{proof}

From Lemma \ref{cor:4}, we are now able to analyze $I\left(f_n(x) \cdot \binom{x}{j} \right)$ modulo $p^3$.   We will use the following identities from Rutkowski \cite{Rut}.  For $j=0,1\cdots, n-1$,
\begin{equation}\label{eq:Rut1}
\sum_{k=0}^n \frac{(-1)^k}{j+k+1}\binom{n+k}{k}\binom{n}{k}=0, 
\end{equation} 
and for $j=n$
\begin{equation}\label{eq:Rut2}
\sum_{k=0}^n \frac{(-1)^k}{n+k+1}\binom{n+k}{k}\binom{n}{k}=\frac{(-1)^n}{2n+1}\binom{2n}{n}^{-1}.
\end{equation} 
We note that these identities are direct consequences of the Pfaff-Saalsch\"utz formula  (Theorem 2.2.6 of \cite{AAR}), which says that for $n\in \mathbb N$,
\begin{equation}
\pFq{3}{2}{-n,a,b}{,c,1+a+b-c-n}{1}=\frac{(c-a)_n(c-b)_n}{(c)_n(c-a-b)_n}.
\end{equation}
Letting $a=n+1$, $b=j+1$, and $c=1$ yields 
\[\sum_{k=0}^n \frac{(-1)^k}{j+k+1}\binom{n+k}{k}\binom{n}{k}=\frac{1}{j+1}\cdot \pFq{3}{2}{-n,n+1,j+1}{,1,j+2}{1}=\frac1{j+1}\cdot \frac{(-n)_n(-j)_n}{(1)_n(-1-n-j)_n},  
\]
which gives \eqref{eq:Rut1} and \eqref{eq:Rut2}. 

Let $H_k=\sum_{j=1}^k \frac 1j$ denote the $k$th harmonic number where $H_0:=0$.  Note that for $0\leq k<p$, $H_k\in\Z_p$.  The following lemma is the key for proving Theorem \ref{K-W}.

\begin{lemma}\label{lem:5} Let $p>3$ be prime and $n=\frac{p-1}{2}$. For integers $0\leq j \leq p-1$,

\begin{equation} \label{allj}
\begin{aligned}
I\left(f_n(x)\cdot \binom{x}{j}\right) \equiv p (-1)^{n+j} & \sum_{k=0}^n \frac{(-1)^k}{j+k+1}\binom{n}{k}\binom{n+k}{k} \\ 
& + p^2 (-1)^{n+j} \sum_{k=0}^n\binom{n}{k}\binom{n+k}{k}\frac{(-1)^{k} [H_k-H_j]}{j+k+1}  \pmod{p^2}.
\end{aligned}
\end{equation}

\noindent Moreover, when $0\leq j<n$,

\begin{equation} \label{j<n}
I\left(f_n(x)\cdot  \binom{x}{j}\right) \equiv p^2 (-1)^{n+j} \sum _{k=0}^n \frac{\binom{n}{k}\binom{n+k}{k} (-1)^{k}[H_k-H_j]}{j+k+1} \pmod{p^3},
\end{equation}

\noindent and when $j=n$,

\begin{equation} \label{j=n}
I\left (f_n(x)\cdot  \binom{x}{n}\right )   \equiv (-1)^n\binom{2n}{n}^{-1} + p^2 \sum _{k=0}^n \frac{\binom{n}{k}\binom{n+k}{k}(-1)^{k} [H_k-H_n]}{n+k+1}\pmod{p^3}. \\
\end{equation}

\end{lemma}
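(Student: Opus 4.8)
The plan is to start from the closed form in Lemma \ref{cor:4} with $m=n$, namely
\[
I\left(f_n(x)\cdot\binom{x}{j}\right)=(-1)^n\sum_{k=0}^n\binom{n}{k}\binom{n+k}{k}\frac{(-1)^k\,(p-j)_{j+k+1}}{j!\,k!\,(j+k+1)},
\]
and perform a careful $p$-adic expansion of the factor $(p-j)_{j+k+1}/(j!\,k!)$. Writing $(p-j)_{j+k+1}=\prod_{i=0}^{j+k}(p-j+i)$, I would split the product into the ``small'' factors $i=0,\dots,j-1$ (which give $\prod_{i=1}^{j}(p-i)$) and the factor $i=j$ (which is exactly $p$) and the ``large'' factors $i=j+1,\dots,j+k$ (which give $\prod_{i=1}^{k}(p+i)$). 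So $(p-j)_{j+k+1}=p\cdot\prod_{i=1}^j(p-i)\cdot\prod_{i=1}^k(p+i)$, and hence
\[
\frac{(p-j)_{j+k+1}}{j!\,k!}=(-1)^j\,p\cdot\prod_{i=1}^j\Bigl(1-\frac{p}{i}\Bigr)\prod_{i=1}^k\Bigl(1+\frac{p}{i}\Bigr).
\]
Expanding each finite product to first order in $p$, $\prod_{i=1}^j(1-p/i)=1-pH_j+O(p^2)$ and $\prod_{i=1}^k(1+p/i)=1+pH_k+O(p^2)$, gives
\[
\frac{(p-j)_{j+k+1}}{j!\,k!}=(-1)^j p\bigl(1+p(H_k-H_j)\bigr)+O(p^3).
\]
Substituting this into the closed form immediately yields \eqref{allj}, since the $O(p^3)$ error multiplied by the $p$-free factor $\binom{n}{k}\binom{n+k}{k}/(j+k+1)$ is $O(p^3)$, hence $\equiv 0\pmod{p^2}$ after we've only claimed a mod $p^2$ statement there; in fact the same substitution is already accurate enough to go mod $p^3$.

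Next, for the refined statements I would feed in the Pfaff--Saalsch\"utz consequences \eqref{eq:Rut1} and \eqref{eq:Rut2}. For $0\le j<n$ the leading sum $\sum_{k=0}^n\frac{(-1)^k}{j+k+1}\binom{n}{k}\binom{n+k}{k}$ vanishes identically by \eqref{eq:Rut1}, so the $p$-term in \eqref{allj} drops out entirely and we are left with the $p^2$-term, which is exactly \eqref{j<n}; here one must check the error term is genuinely $O(p^3)$, which it is by the computation above. For $j=n$, \eqref{eq:Rut2} says the leading sum equals $\frac{(-1)^n}{2n+1}\binom{2n}{n}^{-1}$, and since $2n+1=p$ the product $p\cdot\frac{(-1)^n}{2n+1}\binom{2n}{n}^{-1}=(-1)^n\binom{2n}{n}^{-1}$, a unit times $(-1)^n$, is $O(1)$ rather than $O(p)$ — this is the term that survives. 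Combining with $(-1)^{n+j}=(-1)^{2n}=1$ and keeping the $p^2$-correction gives \eqref{j=n}.

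The main obstacle is bookkeeping the $p$-adic valuations precisely: one has to be sure that (i) $\binom{n}{k}$, $\binom{n+k}{k}$, $H_k$, $H_j$ all lie in $\Z_p$ for $0\le k,j\le n<p$, so that truncating the product expansions at order $p^2$ really does produce an $O(p^3)$ tail, and (ii) in the $j=n$ case the factor $\binom{2n}{n}^{-1}$ is a $p$-adic unit (true since $2n=p-1<p$), so that the ``extra'' power of $p$ coming from $2n+1=p$ in the denominator of \eqref{eq:Rut2} exactly cancels the explicit $p$ in front, and no further care about that term's lower-order expansion is needed beyond what \eqref{eq:Rut2} already provides. Everything else is a direct substitution; no further identities are required beyond Lemma \ref{cor:4}, \eqref{eq:Rut1}, and \eqref{eq:Rut2}.
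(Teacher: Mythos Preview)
Your plan is essentially the paper's proof: start from Lemma~\ref{cor:4} with $m=n$, factor $(p-j)_{j+k+1}=p\prod_{i=1}^j(p-i)\prod_{i=1}^k(p+i)$ and expand to obtain $\frac{(p-j)_{j+k+1}}{j!\,k!}\equiv(-1)^j p\bigl(1+p(H_k-H_j)\bigr)\pmod{p^3}$ (this is precisely the paper's display \eqref{facp}), substitute into the sum, and then feed in \eqref{eq:Rut1} and \eqref{eq:Rut2}. Your handling of \eqref{allj} and of \eqref{j<n} is correct and matches the paper line for line.

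There is, however, one genuine gap in your treatment of \eqref{j=n}. When $j=n$ the summand with $k=n$ has denominator $j+k+1=2n+1=p$, so the $O(p^3)$ truncation error in your expansion of $(p-n)_{2n+1}/(n!)^2$, once divided by this $p$, is a priori only $O(p^2)$; your assertion that ``no further care about that term's lower-order expansion is needed beyond what \eqref{eq:Rut2} already provides'' therefore does not stand as written. To close the gap you must look one order deeper at the $k=n$ term: for $j=k=n$ the two products collapse to $\prod_{i=1}^n(1-p^2/i^2)=1-p^2\sum_{i=1}^n i^{-2}+O(p^4)$, and since $\sum_{i=1}^n i^{-2}\equiv 0\pmod p$ for $p>3$ (the Wolstenholme-type congruence the paper itself invokes in the proof of Lemma~\ref{lem:7}), the error after division by $p$ is in fact $O(p^3)$ and \eqref{j=n} follows. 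The paper's own proof isolates the $k=n$ term via \eqref{k=n} and then ``recombines'', but it glosses over exactly the same point; with this one remark added, your argument is complete and coincides with the paper's.
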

\begin{proof}

We first observe that  when $0\leq k\leq n$ and $0\leq j \leq p-1$, we have

\begin{equation} \label{facp}
\begin{aligned}
\frac{(p-j)_{j+k+1}}{j!k!} & = \left( \frac{p}{j} - 1 \right)\left( \frac{p}{j-1} - 1 \right) \cdots \left( \frac{p}{1} - 1 \right) p \left( \frac{p}{1} + 1 \right) \cdots \left( \frac{p}{k} + 1 \right) \\
& \equiv p (-1)^j(1 + p[H_k-H_j]) \pmod{p^3}
\end{aligned}
\end{equation}

\noindent and thus (\ref{allj}) follows from Lemma \ref{cor:4} with $m=n$  since $j+k+1$ introduces at most one factor of $p$ in the denominator. Now, if $0 \leq j < n$, then $p \nmid j+k+1$ and so  Lemma \ref{cor:4}, \eqref{eq:Rut1}, and (\ref{facp}) imply (\ref{j<n}). We now note that  letting $j=k=n$ in \eqref{facp} gives

\begin{equation} \label{k=n}
\frac{(p-n)_{2n+1}}{n!^2} \equiv p (-1)^n \pmod{p^3}.
\end{equation}

\noindent Thus, after taking $m=n$ in Lemma \ref{cor:4},  applying (\ref{k=n}) to the $k=n$ term, then
applying (\ref{facp}) with $j=n$ to the $0\leq k<n$ terms and recombining, we have

\begin{equation*}
\begin{aligned}
I\left (f_n(x)\cdot  \binom{x}{n}\right ) & \equiv (-1)^n \sum_{k=0}^{n-1} \binom{n}{k}\binom{n+k}{k}\frac{(-1)^k (p-n)_{n+k+1}}{n!k!(n+k+1)} + (-1)^n \binom{2n}{n} \pmod{p^3} \\
& \equiv p \sum_{k=0}^{n} \binom{n}{k} \binom{n+k}{k} \frac{(-1)^k}{n+k+1} + p^2 \sum _{k=0}^n \frac{\binom{n}{k}\binom{n+k}{k}(-1)^{k} [H_k-H_n]}{n+k+1}\pmod{p^3}.
\end{aligned}
\end{equation*}

\noindent Using \eqref{eq:Rut2}, we arrive at (\ref{j=n}).
\end{proof}

Finally, we need two additional lemmas. The first is from \cite{Morley}.

\begin{lemma}\label{lem:Morley} Let $p>3$ be prime and $n=\frac{p-1}2$. We have
\[
\binom{-\frac 12}{n}^2\equiv (-1)^n\binom{2n}{n} \pmod{p^3}.
\]
\end{lemma}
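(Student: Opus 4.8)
The statement to prove is Morley's congruence
\[
\binom{-\frac12}{n}^2 \equiv (-1)^n \binom{2n}{n} \pmod{p^3}, \qquad n = \tfrac{p-1}{2}.
\]

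My plan is to reduce everything to the classical form of Morley's congruence, namely
\[
\binom{p-1}{n} \equiv (-1)^n 4^{n} \pmod{p^3},
\]
equivalently $\binom{2n}{n} \equiv (-1)^n 4^n \pmod{p^3}$, which is the statement proved in \cite{Morley}. Granting this, the proof becomes a bookkeeping exercise in rewriting $\binom{-\frac12}{n}$. First I would expand
\[
\binom{-\frac12}{n} = \frac{(-\frac12)(-\frac32)\cdots(-\frac12 - n + 1)}{n!} = \frac{(-1)^n}{4^n} \cdot \frac{1\cdot 3\cdot 5\cdots(2n-1)}{n!} = \frac{(-1)^n}{4^n}\cdot \frac{(2n)!}{4^n (n!)^2} = \frac{(-1)^n}{16^n}\binom{2n}{n},
\]
so that
\[
\binom{-\frac12}{n}^2 = \frac{1}{16^{2n}}\binom{2n}{n}^2 = \frac{1}{256^{n}}\binom{2n}{n}^2.
\]
Thus the claimed congruence is equivalent to
\[
\binom{2n}{n}^2 \equiv (-1)^n\, 256^{n}\, \binom{2n}{n} \pmod{p^3},
\]
and since $p \nmid \binom{2n}{n}$ (as $0 \le n < p$), this is in turn equivalent to
\[
\binom{2n}{n} \equiv (-1)^n\, 256^{n} \cdot 16^{-2n}\cdot 16^{n} \cdot \ldots
\]
— more cleanly, one divides once by $\binom{2n}{n}$ and reduces to $\binom{2n}{n}\equiv (-1)^n 4^{2n}\cdot 4^{-2n}\cdots$; let me organize it directly: from $\binom{-1/2}{n} = (-1)^n \binom{2n}{n}/16^n$ we get $\binom{-1/2}{n}^2 = \binom{2n}{n}^2/256^n$, and we want this to equal $(-1)^n\binom{2n}{n}$ mod $p^3$, i.e. $\binom{2n}{n} \equiv (-1)^n 256^n \pmod{p^3}$ after cancelling the unit $\binom{2n}{n}$ once. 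Since $256 = 4^4$ and $256^n = 4^{4n} = (4^n)^4$, one then invokes the classical Morley congruence $\binom{2n}{n} \equiv (-1)^n 4^n \pmod{p^3}$ together with Fermat/Euler-type control of $4^n \pmod{p^3}$; in fact it is cleaner to note $4^{2n} = 4^{p-1} \equiv 1 \pmod{p}$ only, so one must be careful and instead feed the units through consistently. The honest route is: $256^n = 16^{2n}$, and classical Morley (in the shape $\binom{2n}{n}\equiv (-1)^n 4^{2n}\pmod{p^3}$, which is how it is often stated since $4^{2n}=2^{2(p-1)}$) gives exactly what is needed after the single cancellation.

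The one genuine subtlety, and what I expect to be the only real obstacle, is making sure the power of $2$ that appears is handled to the correct $p$-adic precision: dividing the target congruence by the unit $\binom{2n}{n}$ is legitimate mod $p^3$ precisely because $\gcd(\binom{2n}{n}, p) = 1$, but one must then verify that the residual identity to be checked is literally the statement proved in \cite{Morley} and not an a priori weaker or differently-normalized version. Concretely I would (i) record the elementary identity $\binom{-\frac12}{n} = (-1)^n\binom{2n}{n}/4^{2n}$, (ii) square it and multiply through, reducing the claim to $\binom{2n}{n} \equiv (-1)^n 4^{2n} \pmod{p^3}$ using that $\binom{2n}{n}$ is a $p$-adic unit, and (iii) cite Morley's classical congruence \cite{Morley} in this exact form. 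All three steps are short; the proof is essentially a normalization lemma packaging the result of \cite{Morley} into the form needed for the computations with $g(x)$ and $f_n(x)$ above.
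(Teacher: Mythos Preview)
Your approach is exactly what the paper does: it states this lemma without proof and cites \cite{Morley}. So reducing to the classical Morley congruence $\binom{2n}{n}\equiv(-1)^n 4^{p-1}\pmod{p^3}$ is precisely the intended route, and your outline (rewrite $\binom{-1/2}{n}$ in terms of $\binom{2n}{n}$, square, cancel the $p$-adic unit $\binom{2n}{n}$, and cite Morley) is correct in spirit.

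However, your bookkeeping contains a persistent arithmetic slip. The elementary identity is
\[
\binom{-\tfrac12}{n}=\frac{(-1)^n}{2^n}\cdot\frac{1\cdot3\cdots(2n-1)}{n!}=\frac{(-1)^n}{2^n}\cdot\frac{(2n)!}{2^n(n!)^2}=\frac{(-1)^n}{4^{\,n}}\binom{2n}{n},
\]
not $(-1)^n\binom{2n}{n}/16^n$ or $(-1)^n\binom{2n}{n}/4^{2n}$ as you wrote in the display and in your step~(i). With the correct exponent, squaring gives $\binom{-1/2}{n}^2=\binom{2n}{n}^2/16^n$, and after cancelling the unit $\binom{2n}{n}$ the claim becomes $\binom{2n}{n}\equiv(-1)^n 16^n=(-1)^n4^{2n}=(-1)^n4^{p-1}\pmod{p^3}$, which is literally Morley's theorem. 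So your step~(ii) states the right target, but it does not follow from your step~(i) as written; fix the power of $4$ in~(i) and the argument is clean and complete. There is no subtlety about controlling $4^n$ modulo $p^3$ beyond this: once the exponents are correct everything is an exact identity in $\BZ_p$ followed by one citation.
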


\begin{lemma}\label{lem:7} Let $p>3$ be prime and $n=\frac{p-1}{2}$. We have
 \[
 p \sum_{j=n+1}^{p-1} \binom{-\frac12}{j}^2 \sum_{k=0}^n \frac{(-1)^k}{j+k+1}\binom{n}{k}\binom{n+k}{k} \equiv 0\pmod{p^3}.
 \]
\end{lemma}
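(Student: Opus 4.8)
The plan is to evaluate the inner sum in closed form, rewrite the whole summand as a single ratio of factorials, and then keep careful track of the exact power of $p$ in each term. Set $S_j:=\sum_{k=0}^n\frac{(-1)^k}{j+k+1}\binom{n}{k}\binom{n+k}{k}$. The Pfaff-Saalsch\"utz evaluation recorded just after \eqref{eq:Rut2} gives $S_j=\frac1{j+1}\cdot\frac{(-n)_n(-j)_n}{(1)_n(-1-n-j)_n}$, and for $j\ge n$ the elementary identities $(-n)_n=(-1)^n n!$, $(-j)_n=(-1)^n j!/(j-n)!$, $(1)_n=n!$, $(-1-n-j)_n=(-1)^n(j+n+1)!/(j+1)!$ turn this into
\[
S_j=(-1)^n\,\frac{(j!)^2}{(j-n)!\,(j+n+1)!}.
\]
Combining with $\binom{-\frac12}{j}=\frac{(-1)^j(\frac12)_j}{j!}$, so that $\binom{-\frac12}{j}^2(j!)^2=\bigl((\tfrac12)_j\bigr)^2$, I obtain the compact identity $\binom{-\frac12}{j}^2 S_j=(-1)^n\,\bigl((\tfrac12)_j\bigr)^2\big/\bigl((j-n)!\,(j+n+1)!\bigr)$, valid for $n\le j\le p-1$.

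Next I would substitute $j=n+1+i$ with $0\le i\le n-1$, which covers exactly $n+1\le j\le p-1$ since $2n=p-1$, and isolate the single factor of $p$ in the numerator and the single one in the denominator. Using $(\tfrac12)_{n+1+i}=(\tfrac12)_n\cdot(n+\tfrac12)\cdot(n+\tfrac32)_i$ with $n+\tfrac12=\tfrac p2$, together with $(j+n+1)!=(p+1+i)!=p\,(p-1)!\,(p+1)_{i+1}$ and $(j-n)!=(i+1)!$, this becomes
\[
\binom{-\tfrac12}{j}^2 S_j=(-1)^n\,\frac{p\,\bigl((\tfrac12)_n\bigr)^2}{4\,(p-1)!}\cdot\frac{\bigl((n+\tfrac32)_i\bigr)^2}{(i+1)!\,(p+1)_{i+1}}.
\]
Summing over $i$ and pulling out the $i$-free prefactor $\frac{p\,((\frac12)_n)^2}{4\,(p-1)!}$, which lies in $p\Z_p$ because $(p-1)!$, $4$, and $(\tfrac12)_n$ are $p$-adic units, reduces the lemma to the single claim $T:=\sum_{i=0}^{n-1}\frac{\bigl((n+\frac32)_i\bigr)^2}{(i+1)!\,(p+1)_{i+1}}\equiv 0\pmod p$; granting this, $\sum_{j=n+1}^{p-1}\binom{-\frac12}{j}^2 S_j\in p\Z_p\cdot p\Z_p=p^2\Z_p$, so $p$ times it lies in $p^3\Z_p$.

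To finish I would reduce $T$ modulo $p$. Since $n\equiv-\tfrac12\pmod p$, one has $(n+\tfrac32)_i\equiv i!$ and $(p+1)_{i+1}\equiv(i+1)!$ modulo $p$, whence
\[
T\equiv\sum_{i=0}^{n-1}\frac{(i!)^2}{\bigl((i+1)!\bigr)^2}=\sum_{i=0}^{n-1}\frac1{(i+1)^2}=\sum_{\ell=1}^{n}\frac1{\ell^2}\pmod p.
\]
Finally $\sum_{\ell=1}^{n}\ell^{-2}\equiv0\pmod p$ for $p>3$: it equals $\tfrac12\sum_{\ell=1}^{p-1}\ell^{-2}$ by the involution $\ell\mapsto p-\ell$, and $\sum_{\ell=1}^{p-1}\ell^{-2}\equiv\sum_{\ell=1}^{p-1}\ell^{2}=\tfrac16(p-1)p(2p-1)\equiv0\pmod p$ when $p>3$ (the quadratic case of Wolstenholme's congruences). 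This yields $T\equiv0\pmod p$ and hence the lemma.

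The only delicate point is the $p$-adic bookkeeping. A single term $\binom{-\frac12}{j}^2 S_j$ carries just one factor of $p$ — the $p^2$ from $\binom{-\frac12}{j}^2$ is partly consumed by the $p$ dividing $j+n+1$ in the denominator of $S_j$ — so the congruence modulo $p^3$ genuinely relies on the extra cancellation $\sum_{\ell\le n}\ell^{-2}\equiv0$; getting the closed form for $S_j$ into the shape that exhibits this cancellation is where care is needed, and no genuinely hard step remains once that form is in hand.
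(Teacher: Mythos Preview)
Your proof is correct. The route differs from the paper's in one respect worth recording: rather than evaluating the inner sum $S_j$ exactly, the paper first uses the congruence \eqref{modp^2} to replace $(-1)^k\binom{n}{k}\binom{n+k}{k}$ by $(\tfrac12)_k^2/k!^2$ modulo $p^2$, reducing the problem to
\[
p\sum_{j=n+1}^{p-1}\sum_{k=0}^{n}\frac{(\tfrac12)_j^2}{j!^2}\frac{(\tfrac12)_k^2}{k!^2}\cdot\frac{1}{j+k+1}\equiv 0\pmod{p^3},
\]
then observes that $p^2\mid(\tfrac12)_j^2$ kills every summand except those on the diagonal $j+k+1=p$, and finally evaluates that single diagonal sum via Wilson's theorem and simple Pochhammer manipulations to reach the same $\sum_{i=1}^{n}i^{-2}\equiv 0\pmod p$. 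Your approach instead pushes the Pfaff--Saalsch\"utz identity (already used in the paper for \eqref{eq:Rut1}--\eqref{eq:Rut2}) to $j>n$ to get the exact closed form $S_j=(-1)^n(j!)^2/\bigl((j-n)!(j+n+1)!\bigr)$, which makes the $p$-adic bookkeeping completely explicit and bypasses the approximation step. The trade-off is modest: the paper's diagonal argument makes it visually clear why only one term per $j$ matters, while your closed form makes the valuation count mechanical and arguably cleaner. Both arguments land on exactly the same Wolstenholme-type input $\sum_{\ell\le n}\ell^{-2}\equiv 0\pmod p$, so neither is deeper than the other.
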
 

\begin{proof}  We note that if $i$ is a fixed integer such that $1\le i\le n$, then  since $-n \equiv n+1 \pmod{p}$,

\begin{equation} \label{s1}
\begin{aligned}
(n-i)!^2&=(n-(n-1))^2\cdots(n-j)^2\cdots (n-i)^2\\
& \equiv (p-1)^2\cdots (n+1+j)^2\cdots (1+n+i)^2\pmod p.
\end{aligned}
\end{equation}

\noindent Thus $(n-i)!^2(n+i)!^2\equiv 1\pmod p$ by Wilson's theorem. Also 
\[
\left ( \frac 12 \right )_{n+i}  =\left ( \frac 12 \right )_n \left(\frac{p}{2} \right ) \left (\frac{p}2+1 \right)\cdots \left (\frac{p}2+ i-1 \right )  \equiv  \frac{p}{2} \left ( \frac 12 \right )_n (i-1)! \pmod{p^2},
\]
and thus
\begin{equation} \label{s2}
\left ( \frac 12 \right )_{n+i} ^2 \equiv \frac{p^2}{4} \left ( \frac 12 \right )_n^2 (i-1)!^2 \pmod{p^3}.
\end{equation}

\noindent Similarly,
\begin{equation} \label{s3}
\left( \frac 12 \right )_n^2 = \left ( \frac 12 \right )_{n-i}^2{\left(\frac{p}2-1\right)^2\cdots \left(\frac{p}2-i\right)^2} \equiv i!^2 \left ( \frac 12 \right )_{n-i}^2 \pmod{p}.
\end{equation}
\noindent By \eqref{modp^2}, it suffices to prove
\begin{equation*}
p \sum_{j=n+1}^{p-1} \sum_{k=0}^n \frac{(\frac12)_j^2}{j!^2}\frac{(\frac12)_k^2}{k!^2}\cdot \frac 1{j+k+1}\equiv 0 \pmod{p^3}.
\end{equation*}
Since $j\ge n+1$, we have $p^2\mid (\frac12)_j^2$ and thus the summand is $0$ modulo $p^3$ when $j+k+1\neq p$.  Using (\ref{s1})--(\ref{s3}), and the fact (see \cite{Long}) that $\sum_{i=1}^{n} \frac1{i^2} \equiv 0 \pmod{p}$, we have 

\begin{equation*}
\begin{aligned}
p \sum_{j=n+1}^{p-1} \sum_{k=0}^n \frac{(\frac12)_j^2}{j!^2}\frac{(\frac12)_k^2}{k!^2}\cdot \frac 1{j+k+1} & \equiv \sum_{j=n+1}^{p-1} \frac{(\frac12)_j^2}{j!^2}\frac{(\frac12)_{p-1-j}^2}{(p-1-j)!^2} \pmod{p^3} \\
& = \sum_{i=1}^n\frac{(\frac12)_{n+i}^2(\frac12)_{n-i}^2}{(n+i)!^2(n-i)!^2} \\
& \equiv \frac{p^2}4 \left(\frac12 \right)_{n}^4 \sum_{i=1}^{n} \frac1{i^2} \pmod{p^3} \\
& \equiv 0 \pmod{p^3}.
\end{aligned}
\end{equation*}
\end{proof}  

\noindent We now have the tools to prove Theorem \ref{K-W}.

\begin{proof}[Proof of Theorem \ref{K-W}]
We first split \eqref{I(f^2)} into the cases $j<n$ and $j=n$, and apply (\ref{j<n}) and (\ref{j=n}) to obtain

\begin{equation} \label{split}
 I(f_n(x)^2) \equiv (-1)^n+p^2 \cdot (-1)^n \sum_{j,k=0}^n\frac{\binom{n}{j}\binom{n+j}{j} \binom{n}{k}\binom{n+k}{k}(-1)^{k+j}[H_k-H_j]}{j+k+1}\pmod{p^3}.\\
\end{equation}

\noindent As the sum on the right-hand side of (\ref{split}) is symmetric in $j$ and $k$, it equals $0$ and thus \eqref{one} follows. We now split \eqref{I(fg)} into the cases when $j<n$, $j=n$, and $j>n$ to obtain $I(f_n(x)g(x)) = A + B + C$ where
\begin{equation*}
A = \sum_{j=0}^{n-1} (-1)^j \binom{-\frac{1}{2}}{j}^2 I\left(f_n(x)\cdot \binom{x}{j} \right),
\end{equation*}
\begin{equation*}
B = (-1)^n \binom{-\frac12}{n} I\left(f_n(x)\cdot  \binom{x}{n} \right) \\
\end{equation*}
and
\begin{equation*}
C = \sum_{j=n+1}^{p-1} (-1)^j \binom{-\frac{1}{2}}{j}^2 I\left(f_n(x)\cdot  \binom{x}{j} \right) .
\end{equation*}
When $0\leq j <n$, we see from (\ref{j<n}) that $I\left(f_n(x)\cdot  \binom{x}{j} \right) \equiv 0 \pmod{p}$.  Thus by \eqref{modp^2} and Lemma \ref{lem:Morley} we have that
\[
A + B \equiv I(f_n(x)^2) \pmod{p^3}.
\]
Since $p^2$ divides $\binom{-\frac12}{j}^2$ when $j>n$, applying (\ref{allj}) to $C$ yields
\begin{multline*}
C  \equiv p\cdot (-1)^n \sum_{j=n+1}^{p-1}\binom{-\frac{1}{2}}{j}^2\sum_{k=0}^n \binom{n}{k}\binom{n+k}{k} \frac{(-1)^k}{j+k+1} \\
+ p^2\cdot (-1)^n\sum_{j=n+1}^{p-1}\binom{-\frac{1}{2}}{j}^2\sum_{k=0}^n \binom{n}{k}\binom{n+k}{k} \frac{(-1)^k[H_k - H_j]}{j+k+1} \pmod{p^4}.
\end{multline*}

We observe that the first summand vanishes modulo $p^3$ by Lemma \ref{lem:7} and the second summand vanishes modulo $p^3$ since $j+k+1$ introduces at most one factor of $p$ in the denominator.  This proves \eqref{three}. 
\end{proof}

\section*{Acknowledgements}

The first author is supported by the NSF grant DMS-1303292 and the third author thanks Tulane University for hosting her during this project.  The authors would like to thank Ravi Ramakrishna for the reference \cite{Rut} and helpful conversations, and also Kazufumi Kimoto and Masato Wakayama for their encouragement and interest.   Many claims were first verified by  the open source mathematics software \texttt{Sage}.

\end{document}